\DeclareMathOperator{\cov}{Cov}
\DeclareMathOperator{\cor}{Cor}
\DeclareMathOperator{\var}{Var}
\def\bP{\mathbb{P}}
\newtheorem{theorem}{Theorem}
\theoremstyle{definition}
\newtheorem{example}[theorem]{Example}
\theoremstyle{remark}
\newtheorem{remark}[theorem]{Remark}
\DeclareMathOperator{\Cov}{Cov} 
\def\bP{\mathbb{P}}
\def\bR{\mathbb{R}}
\def\bN{\mathbb{N}}
\begin{document}

\title[A note on the zero conditional correlation]{A note on the equivalence between the conditional uncorrelation and the independence of random variables}

\author{Piotr Jaworski$^{\ast}$}
\author{Damian Jelito$^{\dagger}$}
\author{Marcin Pitera$^{\dagger}$}
\address{$^{\ast}$Institute of Mathematics, University of Warsaw, Banacha 2, 02-097 Warsaw, Poland}
\address{$^{\dagger}$Institute of Mathematics, Jagiellonian University, S. {\L}ojasiewicza 6, 30-348 Krak{\'o}w, Poland}
\email{p.jaworski@mimuw.edu.pl, damian.jelito@uj.edu.pl, marcin.pitera@uj.edu.pl}

\maketitle

\vspace{-1cm}
\begin{abstract}
It is well known that while the independence of random variables implies zero correlation, the opposite is not true. Namely, uncorrelated random variables are not necessarily independent. In this note we show that the implication could be reversed if we consider the localised version of the correlation coefficient. More specifically, we show that if random variables are conditionally (locally) uncorrelated for any quantile conditioning sets, then they are independent. For simplicity, we focus on the absolutely continuous case. Also, we illustrate potential usefulness of the stated result using two simple examples.
\vspace{0.2cm}

\noindent {\it Keywords:} correlation, Pearson's correlation, linear dependence, zero conditional correlation, zero conditional covariance, independence, linear independence, local correlation\\
\noindent {\it MSC2020:}  60E05, 62E10, 62H20
\end{abstract}


\section{Introduction}
The concept of linear correlation was first presented in~\cite{Gal1889}, see~\cite{Sti1989} for a historical note on the correlation invention. While mathematically simple and elegant, statistical analysis based on correlation measurement could be confusing and lead to subtle errors if not treated with caution, see e.g. \cite{Ald1995}, \cite{Bel2015}, and references therein. Since correlation aims to measure the linear dependence between random variables, it often fails to properly capture non-linear structures. Although the dependence could be fully described using the copula function, it is more appealing, especially to practitioners, to use simpler (numeric) characteristics to describe the degree of dependence, see~\cite{Nel2006} or \cite{KotDro2001}. Because of that, a lot of alternative measures of dependence have been proposed in the literature and this field is constantly evolving. Let us alone mention the concepts of concordance measures, entropy correlations, projection correlations, tail correlations, partial and conditional correlations, maximal correlations, time-varying dynamic correlations, local Gaussian correlations, and distance correlations based on energy statistics, see~\cite{Sca1984,RaoSetXuCheTagPri2011,ZhuXuRunZho2017,AkeBruRobSimWei1984,BabShiSib2004,KenHuaVodHavSta2015,Wit1975,Aie2013,TjoHuf2013,SzeRizBak2007,TjoOtnSto2022}, and references therein.

Typically, it is expected that the zero value of a given dependence measure should, in some sense, imply independence. What is interesting, at first, the concept of null linear correlation was often mixed with independence and it took some time for statisticians to distinguish between null correlation and statistical independence, see \cite{Dav2009}. Of course, it is currently well known that while the independence of random variables implies zero correlation, the opposite is not true, see \cite{Bro1986} for a classroom example.

In this short paper we answer a simple question about how one can revert the aforementioned implication, i.e. whether one can use linear correlation to study (proper) independence. Allowing non-linear transforms of random variables, the reverse implication is in fact trivially true as one of the alternative definition of independence states that two random variables $X$ and $Y$ are independent, if $f(X)$ and $g(X)$ are uncorrelated for any test functions $f$ and $g$; in fact, it is sufficient to consider set indicator functions to directly recover the definition of independence. Still, this characterisation is not appealing from practical perspective since it is hard to pre-set the family of test functions that would work for any arbitrary pair of random variables and lead to efficient statistical setup. Another approach is to consider a localised version of correlation and study its properties, see Section 6 in \cite{KotDro2001} for details. In this paper, following \cite{JawPit2020}, we propose to bind those two approaches together and consider a family of conditional correlations, where the conditioning is based on the quantile set linked to the values of $X$ and $Y$, see Section~\ref{S:main} for details. In the main result of this paper, Theorem~\ref{th:main}, we show that null correlation on every quantile set implies independence of random variables so that the aforementioned implication could be reverted by looking locally into linear relation between random variables. Namely, we show that random variables are independent if and only if they are locally linearly independent. Due to our best knowledge, quite surprisingly, this result has not been stated nor analysed previously in the literature -- this is most likely due to the fact that localised correlations considered so far were not bound directly to quantile sets allowing efficient local treatment.

We believe that our proposal could be appealing to practitioners and could lead to development of new efficient statistical frameworks. In fact, the sample version of (local) quantile correlation could be easily computed using rank statistics and exhibits statistical properties similar to the unconditional correlation; this aspect is left to future research. In other words, the results presented in this paper lay the theoretical ground to expansion of the statistical framework based on quantile conditional moments which already proved to be useful, see e.g.~\cite{HebZimPitWyl2019},~\cite{JelPit2018}, and \cite{PitCheWyl2021}. As an example, one could define the conditional version of the auto-correlation function that could be used to study time-series which exhibits heavy tails, see Example~\ref{ex:2} for details, or study the tail-based correlations to recover dependence conditioned on tail-events, see~\cite{JawPit2015}.

This paper is organised as follows. In Section~\ref{S:prel}, we introduce the basic notation and define the concept of quantile conditional correlation. In Section~\ref{S:main}, we state and prove the main result, Theorem~\ref{th:main}, together with its multidimensional extensions. Finally, in Section~\ref{S:examp}, we show two simple examples that illustrate how our approach could be used to study dependence between random variables.

\section{Preliminaries}\label{S:prel}

Let $(\Omega, \mathcal{F}, \mathbb{P})$ be a probability space and let $(X,Y)$ be a random vector defined on this space. By Sklar's theorem, we know that the joint distribution of $(X,Y)$ can be represented as 
\begin{equation}\label{eq:copula}
\mathbb{P}[X\leq x, Y\leq y]=C(F_X(x), F_Y(y)), \quad x,y\in \mathbb{R},
\end{equation}
where $F_X$ and $F_Y$ denote the distributions of $X$ and $Y$, respectively, and $C$ is the copula function of the vector $(X,Y)$, see e.g. Theorem 2.3.3 in~\cite{Nel2006}. For simplicity, from now on we assume that the vector $(X,Y)$ is absolutely continuous and use $f_X$, $f_Y$, $f$, and $c$, to denote the density functions of $X$, $Y$, $(X,Y)$, and $C$, respectively. Also, we assume that $F_X$ and $F_Y$ are bijective, the vector $(X,Y)$ has a full (non-degenerate) support, and the copula density $c$ is continuous. In this case, the copula function $C$ is unique and can be easily recovered from the joint distribution using the formula $C(u,v)=\bP[X\leq Q_X(u),Y\leq Q_Y(v)]$, $u,v\in (0,1)$, where $Q_X:=F^{-1}_X$ and $Q_Y:=F^{-1}_Y$ are the quantile functions of $X$ and $Y$, respectively.

Now, let us introduce a notation associated with quantile conditional covariances.  Given a set $A\in\mathcal{F}$, we define the conditional covariance of $(X,Y)$ on $A$ by setting
\begin{equation}\label{eq:covariance}
\Cov_{A}[X,Y]:=\mathbb{E}\left[XY| A\right]-\mathbb{E}\left[X| A\right]\mathbb{E}\left[Y| A\right],
\end{equation}
provided that the expectations are well-defined.
In this paper, we are interested in quantile-based conditioning. Namely, given a vector $(X,Y)$ and {\it quantile splits} $0<p_1<q_1<1$ and $0<p_2<q_2<1$, we define the corresponding {\it quantile set} as
\begin{equation}\label{eq:A}
A:=\{Q_X(p_1)\leq X\leq Q_X(q_1)\}\cap \{Q_Y(p_2)\leq Y\leq Q_Y(q_2)\}\in \mathcal{F}.
\end{equation}
Note that since we assumed a full support, we get $\bP[A]>0$ for any quantile split. Also, since both $X$ and $Y$ are bounded on $A$, we get that \eqref{eq:covariance} is well-defined and finite. Thus, we can also define the corresponding conditional correlation by setting
\[
\cor_{A}[X,Y]:=\frac{\Cov_{A}[X,Y]}{\sqrt{\var_{A}[X]\var_{A}[Y]}},
\]
where $\var_A[X]:=\mathbb{E}[X^2|A]-\mathbb{E}^2[X|A]$ and $\var_A[Y]:=\mathbb{E}[Y^2|A]-\mathbb{E}^2[Y|A]$ are conditional covariances of $X$ and $Y$, respectively. 

From now on we assume that we are given specific quantile splits $0<p_1<q_1<1$ and $0<p_2<q_2<1$, and use $A$ to denote the corresponding quantile set as defined in \eqref{eq:A}. For brevity, we also introduce the corresponding value projection set $\tilde A:=[Q_X(p_1), Q_X(q_1)]\times [Q_Y(p_2),Q_Y(q_2)] \subset \mathbb{R}^2$.  
With this notation, we get that~\eqref{eq:covariance} could be expressed as
\begin{equation}\label{eq:covariance_dens}
\Cov_{A}[X,Y]=\frac{1}{\bP[A]}\int_{\tilde A}xyf(x,y)dydx -\frac{1}{\bP^2[A]}\int_{\tilde A}xf(x,y)dydx \int_{\tilde A}yf(x,y)dydx .
\end{equation}
We say that $X$ and $Y$ are conditionally uncorrelated on $A$, if $\cor_{A}[X,Y]=0$.

\section{Main result}\label{S:main}
In this section we present the main result of this note, which shows that the independence of random variables could be linked to their conditional uncorrelation on any quantile set.
\begin{theorem}\label{th:main}
Random variables $X$ and $Y$ are independent if and only if they are conditionally uncorrelated on every quantile set, i.e. for any quantile splits $0<p_1<q_1<1$ and $0<p_2<q_2<1$ and the related set $A$, we get $\cor_{A}[X,Y]=0$.
\end{theorem}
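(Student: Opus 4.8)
The plan is to prove both implications, the forward one being routine and the reverse one forming the technical core.

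For the easy direction, suppose $X$ and $Y$ are independent and fix a quantile set $A=\{Q_X(p_1)\le X\le Q_X(q_1)\}\cap\{Q_Y(p_2)\le Y\le Q_Y(q_2)\}$. Since $A$ is the intersection of an event depending only on $X$ with one depending only on $Y$, the conditional law of $(X,Y)$ given $A$ is again a product measure; consequently $\bE[XY\,|\,A]=\bE[X\,|\,A]\,\bE[Y\,|\,A]$, so $\Cov_A[X,Y]=0$ and hence $\cor_A[X,Y]=0$.

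For the reverse direction I would rewrite the hypothesis $\Cov_A[X,Y]=0$ via \eqref{eq:covariance_dens}. Writing $a=Q_X(p_1)$, $b=Q_X(q_1)$, $c=Q_Y(p_2)$, $d=Q_Y(q_2)$ and $M_{ij}:=\int_a^b\int_c^d x^i y^j f(x,y)\,dy\,dx$, the vanishing of the covariance on every quantile set is equivalent to the identity $M_{00}M_{11}=M_{10}M_{01}$ holding for all $a<b$ and $c<d$ in the support; here the bijectivity of $F_X,F_Y$ together with the full support guarantees that the rectangle $\tilde A=[a,b]\times[c,d]$ ranges over all such sub-rectangles. The key idea is then to extract a pointwise condition on $f$ by differentiating this identity successively in the four free endpoints. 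Differentiating in $b$ and simplifying gives $N_0\int_{\tilde A} y(x-b)f = N_1\int_{\tilde A}(x-b)f$ with $N_j=\int_c^d y^j f(b,y)\,dy$; differentiating the outcome in $d$, then in $a$, then in $c$, and cancelling the nonzero factors $(b-a)$ and $(d-c)$ that emerge at the last two steps, I expect to reach the clean pointwise relation
\[
f(a,c)\,f(b,d)=f(a,d)\,f(b,c).
\]

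Once this multiplicative identity holds for all admissible $a,b,c,d$, independence follows: fixing a reference point $(a_0,c_0)$ with $f(a_0,c_0)>0$ (available by full support) yields $f(x,y)=f(x,c_0)f(a_0,y)/f(a_0,c_0)$, i.e. $f$ factorises as a product of a function of $x$ and a function of $y$, which is equivalent to $f(x,y)=f_X(x)f_Y(y)$ and hence to the independence of $X$ and $Y$. The main obstacle is the middle step: organising the four successive differentiations so that the auxiliary boundary integrals telescope into the factorisation identity, and justifying each differentiation under the integral sign. The latter is where I would invoke the continuity of $f$ (inherited from the assumed continuity of the copula density $c$), which reduces each differentiation with respect to an integration limit to a direct application of the fundamental theorem of calculus.
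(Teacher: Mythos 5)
Your proposal follows essentially the same route as the paper: both implications are split the same way, and the core of the reverse direction is identical in structure --- take the vanishing of the rectangle covariance as an identity in the four endpoints, differentiate successively in each endpoint, cancel the nonzero monotonicity factors, and land on the multiplicative identity $f(a,c)f(b,d)=f(a,d)f(b,c)$ (the paper's version is the equivalent statement $c(q_1,q_2)c(p_1,p_2)=c(q_1,p_2)c(p_1,q_2)$ for the copula density, since $f(x,y)=c(F_X(x),F_Y(y))f_X(x)f_Y(y)$ with positive marginal densities). Your algebra for the four differentiations checks out, and your concluding factorisation via a reference point is a legitimate (arguably cleaner) alternative to the paper's integration argument $C(x,y)=C(x,1)C(1,y)=xy$.

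The one step whose justification as written does not hold is the regularity claim at the end: continuity of $f$ is \emph{not} inherited from continuity of the copula density $c$, because $f(x,y)=c(F_X(x),F_Y(y))f_X(x)f_Y(y)$ and the marginal densities $f_X,f_Y$ are not assumed continuous (only $F_X,F_Y$ bijective, hence $Q_X,Q_Y$ continuous). So the fundamental-theorem-of-calculus differentiations in the endpoints $a,b,c,d$ are not justified pointwise for your integrands $\int x^iy^jf$. This is precisely what the paper's substitution $x=Q_X(u)$, $y=Q_Y(v)$ is designed to fix: after the change of variables the marginal densities cancel against the Jacobian, the integrand becomes $Q_X(u)Q_Y(v)c(u,v)$, which \emph{is} continuous under the stated hypotheses, and the differentiations in the quantile levels $p_i,q_i$ become legitimate. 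You can repair your version either by performing that substitution first, or by invoking Lebesgue differentiation to obtain your multiplicative identity at almost every $(a,b,c,d)$ and choosing the reference point $(a_0,c_0)$ among Lebesgue points with $f(a_0,c_0)>0$; as written, though, the continuity appeal is a gap.
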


\begin{proof}
The fact that the independence of $X$ and $Y$ implies~$\cor_{A}[X,Y]=0$ for any $A\in\mathcal{F}$ follows from the standard argument which is omitted for brevity. Let us now assume that for any quantile splits $0<p_1<q_1<1$ and $0<p_2<q_2<1$ and the related set $A$ we get $\cor_{A}[X,Y]=0$ or equivalently
\begin{equation}\label{p:pr:equation:2}
\cov_{A}[X,Y]=0.
\end{equation}
First, let us show that for any $u_1, v_1, u_2, v_2\in (0,1)$ we have
\begin{equation}\label{eq:copula_dens}
c(u_1,v_1)c(u_2,v_2)-c(u_1,v_2)c(u_2,v_1)=0.
\end{equation} 
We start with deriving a useful representation of $\cov_{A}[X,Y]$ based on~\eqref{eq:covariance_dens}. For any set $A$ defined in~\eqref{eq:A}, using the fact that $f(x,y)=c(F_X(x),F_Y(y))f_X(x)f_Y(y)$, $x,y\in\bR$, and substituting $x=Q_X(u)$ and $y=Q_Y(v)$, we get
\begin{align}
\Cov_{A}[X,Y]&=\frac{1}{\bP[A]}\int_{p_1}^{q_1}\int_{p_2}^{q_2}Q_X(u)Q_Y(v)c(u,v)dv du  -\frac{1}{\bP^2[A]}\int_{p_1}^{q_1}\int_{p_2}^{q_2}Q_X(u)c(u,v)dv du \int_{p_1}^{q_1}\int_{p_2}^{q_2}Q_Y(v)c(u,v)dv du \nonumber\\
& = \frac{1}{\bP^2[A]}\int_{p_1}^{q_1}\int_{p_2}^{q_2}\int_{p_1}^{q_1}\int_{p_2}^{q_2}Q_X(u_1)Q_Y(v_1)\left(c(u_1,v_1)c(u_2,v_2)-c(u_1,v_2)c(u_2,v_1)\right)dv_2 du_2 dv_1 du_1\label{eq:th:main:new1}.
\end{align}
Let us define $H(u_1,v_1,u_2,v_2):=(Q_X(u_1)-Q_X(u_2))(Q_Y(v_1)-Q_Y(v_2))V_c(u_1,v_1,u_2,v_2)$, $u_1,v_1,u_2,v_2\in (0,1)$, with $V_c(u_1,v_1,u_2,v_2):=c(u_1,v_1)c(u_2,v_2)-c(u_1,v_2)c(u_2,v_1)$, $u_1,v_1,u_2,v_2\in (0,1)$, and note that $V_c$ is anti-symmetric in $(u_1,u_2)$ and $(v_1,v_2)$, i.e. we get $V_c(u_2,v_1,u_1,v_2)=-V_c(u_1,v_1,u_2,v_2)$ and $V_c(u_1,v_2,u_2,v_1)=-V_c(u_1,v_1,u_2,v_2)$. Thus, using \eqref{p:pr:equation:2}, we get that \eqref{eq:th:main:new1} implies
\begin{align}\label{eq:th:main:additive}
    0=\int_{p_1}^{q_1}\int_{p_2}^{q_2}\int_{p_1}^{q_1}\int_{p_2}^{q_2}H(u_1,v_1,u_2,v_2)dv_2 du_2 dv_1 du_1.
\end{align}
Using the multi-variable chain rule to differentiate with respect to $q_1$ and changing the order of integration, we get
\[
0=\int_{p_1}^{q_1}\int_{p_2}^{q_2}\int_{p_2}^{q_2}H(q_1,v_1,u_2,v_2)dv_2 dv_1 du_2 + \int_{p_1}^{q_1}\int_{p_2}^{q_2}\int_{p_2}^{q_2}H(u_1,v_1,q_1,v_2)dv_2  dv_1 du_1,
\]
and consequently, due to the symmetry of $H$ (in $(u_1,u_2)$), we have
\begin{equation}\label{eq:th:main:new2}
0=\int_{p_1}^{q_1}\int_{p_2}^{q_2}\int_{p_2}^{q_2}H(q_1,v_1,u_2,v_2)dv_2 dv_1 du_2 .
\end{equation}
Thus, differentiating \eqref{eq:th:main:new2} with respect to $p_1$ yields
\begin{equation*}
0=\int_{p_2}^{q_2}\int_{p_2}^{q_2}H(q_1,v_1,p_1,v_2)dv_2 dv_1.
\end{equation*}
Performing a similar operation again, i.e. differentiating with respect to $q_2$ and then $p_2$, we finally get
\begin{align}\label{eq:th:main:3}
H(q_1,q_2,p_1,p_2)=0.
\end{align}
Now,  using the strict monotonicity of $Q_X$ and $Q_Y$, we get $(Q_X(q_1)-Q_X(p_1))(Q_Y(q_2)-Q_Y(p_2))> 0$. Thus, directly from the definition of $H$, we get that \eqref{eq:th:main:3} implies
\begin{equation}\label{eq:th:main:new5}
c(q_1,q_2)c(p_1,p_2)-c(q_1,p_2)c(p_1,q_2)=0,
\end{equation}
for $0<p_1<q_1<1$ and $0<p_2<q_2<1$. This concludes the proof of~\eqref{eq:copula_dens} since for $q_1=p_1$ or $q_2=p_2$ the equality \eqref{eq:th:main:new5} is trivial, and the symmetry of $H$ allows us to easily extend \eqref{eq:th:main:3} to the full parameter space $q_1,q_2,p_1,p_2\in (0,1)$.

Second, let us show that~\eqref{eq:copula_dens} implies the independence of $X$ and $Y$. Using \eqref{eq:copula_dens} and recalling that $C$ could be seen as a distribution function of a random vector with marginals distributed uniformly on $[0,1]$,  for any $x,y\in [0,1]$ we get
\begin{align}\label{eq:th:main:copula}
C(x,y) &=\int_0^x\int_0^y c(u,v)dv du = \int_0^x\int_0^y\int_0^1\int_0^1 c(u_1,v_1)c(u_2,v_2)dv_2du_2dv_1  du_1  \nonumber \\
& = \int_0^x\int_0^y\int_0^1\int_0^1 c(u_1,v_2)c(u_2,v_1)dv_2du_2dv_1  du_1  \nonumber \\
&= \int_0^x\int_0^1 c(u_1,v_2)dv_2 du_1 \int_0^1\int_0^y c(u_2,v_1)dv_1du_2   \nonumber \\
& =C(x,1)C(1,y)  \nonumber \\
&=xy,
\end{align}
which shows that the copula of $(X,Y)$ is the product copula. Recalling~\eqref{eq:copula}, we get that $X$ and $Y$ are independent, which concludes the proof.
\end{proof}

\begin{remark}[Conditional Spearman's $\rho$ and independence]
From Theorem~\ref{th:main} one can easily deduce that random variables $X$ and $Y$ are independent if and only if conditional Spearman's $\rho$ coefficient on every quantile set is equal to zero. To prove this is it enough to observe that Spearman's $\rho$ is in fact Pearson's correlation applied to the copula function.
\end{remark}

\begin{remark}[Local linear independence implies independence]
By investigating the proof of Theorem~\ref{th:main} one can see that proving Equality~\eqref{eq:th:main:new5} is a key step in establishing independence. While in Theorem~\ref{th:main} we did not set any restriction on quantile split values $0<p_1<q_1<1$ and $0<p_1<q_1<1$, it is in fact sufficient to require that for any quantile point $(Q_X(p),Q_Y(q))$, $p,q\in (0,1)$, the quantile conditional correlations are null inside some neighbourhood of $(Q_X(p),Q_Y(q))$, e.g. for some $\epsilon>0$ and any $p_1,q_1\in (Q_X(p-\epsilon),Q_X(p+\epsilon))$ and $p_2,q_2\in (Q_Y(q-\epsilon),Q_Y(q+\epsilon))$. Indeed, this implies that~\eqref{eq:th:main:additive} is satisfied for any sufficiently small hypercubes $[p_1,q_1]\times [p_1,q_1]\times[p_2,q_2]\times [p_2,q_2]$ which can be combined to recover~\eqref{eq:th:main:additive} for any $0<p_1<q_1<1$ and $0<p_1<q_1<1$ and, consequently, get Equality~\eqref{eq:th:main:new5}.
This effectively shows than random variables $X$ and $Y$ are independent if and only if they are locally linearly independent.
\end{remark}

\begin{remark}[Tail-event dependence and spatial contagion]
From Theorem~\ref{th:main} we can see that to reject the (global) independence of $X$ and $Y$, it is enough to find a single quantile split $0<p_1<q_1<1$ and $0<p_2<q_2<1$ on which the conditional quantile correlation is not equal to zero. In signal processing or financial time-series modelling, it is natural to consider left tail events, e.g. when one or both of the values $q_1$ and $q_2$ are small. Such events could be linked to the presence of the so-called {\it spatial contagion} in which dependence increases in the presence of system turbulence. This might be used to construct statistical frameworks based on quantile tail-event analysis, see \cite{DurJaw2010}, \cite{JawPit2015}, and references therein.
\end{remark}

As we show now, Theorem~\ref{th:main} could be extended to the multivariate case. To get this extension, we use two alternative approaches. First, in Theorem~\ref{th:mutlivariate}, we consider a conditional correlation matrix. Second, in Theorem~\ref{th2}, we use linear combinations of margins.

Before we state the result, let us introduce some notation. Consider an $n$-dimensional random vector $X=(X_1,\ldots,X_n)$ and assume that it satisfies the assumptions analogous to the ones used in Theorem~\ref{th:main}, i.e. bijectiveness of the the marginal distribution functions, full support condition, absolute continuity of the joint distribution, and continuity of the copula density. Also, for any quantile splits $0<p_i<q_i<1$, $i=1, \ldots, n$, we define the quantile set corresponding to $(X_1,\ldots,X_n)$ by
\begin{equation}\label{eq:A_multidim}
A:=\bigcap_{i=1}^n\{Q_i(p_i)\leq X_i\leq Q_i(q_i)\},
\end{equation}
where $Q_i$ is the quantile function of $X_i$, $i=1, \ldots, n$. Finally, by $\Sigma_A$ we denote the associated conditional correlation matrix with the entries given by $\Sigma_A[i,j]:=[\cor_A(X_i,X_j)]$, $i,j=1, \ldots, n$; we also use $\textrm{I}_n$ to denote the $n\times n$ identity matrix.

\begin{theorem}\label{th:mutlivariate}
The $n$-dimensional random vector $X$ has (jointly) independent margins if and only if for any quantile splits $0<p_i<q_i<1$, $i=1, \ldots, n$, and the related set $A$, its conditional correlation matrix $\Sigma_A$ is equal to the identity matrix.
\end{theorem}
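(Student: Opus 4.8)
The plan is to reduce the claim to the bivariate Theorem~\ref{th:main} applied to partially marginalised copula densities, and then to assemble the resulting pairwise factorisations into a single product factorisation of the full $n$-dimensional copula density. Throughout, note that the requirement $\Sigma_A=\textrm{I}_n$ is equivalent to the off-diagonal conditions $\cor_A(X_i,X_j)=0$ (hence $\Cov_A[X_i,X_j]=0$) for all $i\neq j$, since the diagonal entries $\cor_A(X_i,X_i)$ are automatically equal to $1$ by the full support assumption. The forward implication is again standard: joint independence yields the product copula density $c\equiv 1$, and hence every off-diagonal conditional covariance vanishes.

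For the converse, I would first fix a pair $i\neq j$ together with arbitrary splits $0<p_m<q_m<1$ for the remaining indices $m\neq i,j$. Carrying out the substitution that leads to~\eqref{eq:th:main:new1} and integrating out the variables $u_m$, $m\neq i,j$ (which do not interact with the factors $Q_i(u_i)Q_j(u_j)$), one sees that $\Cov_A[X_i,X_j]$ is given precisely by the bivariate covariance formula of Theorem~\ref{th:main}, with $Q_X,Q_Y$ replaced by $Q_i,Q_j$ and the copula density $c$ replaced by the (continuous, strictly positive) partially integrated density
\[
\tilde c_{ij}(u_i,u_j):=\int_{\prod_{m\neq i,j}[p_m,q_m]}c(\mathbf u)\,d\mathbf u_{-ij}.
\]
Since the first part of the proof of Theorem~\ref{th:main} uses only this covariance representation, the antisymmetry of $V_c$, and the strict monotonicity of the quantile functions, it carries over verbatim to $\tilde c_{ij}$ and delivers
\[
\tilde c_{ij}(a,s)\,\tilde c_{ij}(b,t)=\tilde c_{ij}(a,t)\,\tilde c_{ij}(b,s),\qquad a,b,s,t\in(0,1).
\]

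Next I would localise in the remaining coordinates. Taking the boxes above to be $\prod_{m\neq i,j}[t_m-\epsilon,t_m+\epsilon]$, dividing the displayed identity by $(2\epsilon)^{2(n-2)}$ and letting $\epsilon\to 0$, continuity of $c$ turns it into the pointwise factorisation
\[
c(\ldots,u_i{=}a,\ldots,u_j{=}s,\ldots)\,c(\ldots,u_i{=}b,\ldots,u_j{=}t,\ldots)=c(\ldots,u_i{=}a,\ldots,u_j{=}t,\ldots)\,c(\ldots,u_i{=}b,\ldots,u_j{=}s,\ldots),
\]
valid for every fixed value of the coordinates indexed by $m\neq i,j$. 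Taking logarithms (admissible since $c>0$), this states that all pairwise mixed second differences of $\log c$ vanish; a standard argument (by induction on $n$) then gives additive separability $\log c(\mathbf u)=\sum_{k}\ell_k(u_k)$, i.e. $c(\mathbf u)=\prod_k\phi_k(u_k)$. Finally, the uniform marginals of a copula, $\int_{[0,1]^{n-1}}c\,d\mathbf u_{-k}=1$ for each $k$, force every $\phi_k$ to be constant and therefore $c\equiv 1$; by Sklar's theorem~\eqref{eq:copula} the margins of $X$ are then jointly independent.

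I expect the main obstacle to be exactly the gap that distinguishes this statement from a purely pairwise one: zero conditional correlation for each pair is a priori only pairwise information, and pairwise independence does not imply joint independence. The device that closes this gap is the localisation step — retaining the conditioning box on the remaining coordinates and shrinking it to a point shows that the $(u_i,u_j)$-factorisation of $c$ holds for every fixed value of the other coordinates, not merely after integrating them out, and it is this uniform-in-the-remaining-coordinates factorisation that upgrades the pairwise conclusions to the full product structure.
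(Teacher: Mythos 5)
Your proof is correct, and while its first half coincides with the paper's, the way you assemble the pairwise information into joint independence is genuinely different. Both arguments begin by fixing a pair $(i,j)$ and a box in the remaining coordinates, rewriting $\Cov_A[X_i,X_j]$ in the form~\eqref{eq:th:main:new1} with the partially integrated density in place of $c$, and running the bivariate differentiation argument in $(p_i,q_i,p_j,q_j)$. From there the paper (for $n=3$) keeps differentiating, now in $q_3$ and $p_3$, which produces not a pointwise factorisation but the symmetrised two-term identity $V_c(q_1,q_2,q_3,p_1,p_2,p_3)+V_c(q_1,q_2,p_3,p_1,p_2,q_3)=0$; integrating this yields the functional equation $C(x,y,z)=C(x,1,z)y-C(x,y,1)z+C(1,y,z)x$ in~\eqref{eq:th:multi_1}, which is then solved using the bivariate marginals obtained from the other two pairs. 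You instead shrink the box in the remaining coordinates to a point, obtaining the pointwise identity $c(\mathbf t_{-ij},a,s)c(\mathbf t_{-ij},b,t)=c(\mathbf t_{-ij},a,t)c(\mathbf t_{-ij},b,s)$ for every frozen $\mathbf t_{-ij}$, and then invoke the standard fact that vanishing of all pairwise mixed second differences of $\log c$ forces additive separability, hence $c\equiv 1$. Your route is conceptually cleaner and scales transparently to arbitrary $n$ (the paper's symmetrised identity acquires $2^{n-2}$ terms in general and is only sketched for $n=3$); the paper's route buys freedom from positivity assumptions, since it only ever integrates the derived identities. On that last point, note one small caveat in your argument: taking logarithms (and asserting that $\tilde c_{ij}$ is strictly positive) requires $c>0$ everywhere, which does not literally follow from ``full support plus continuity of $c$'' --- a continuous density with full support may still vanish on a null set. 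This is easily repaired (for each frozen $\mathbf t_{-ij}$ the identity $g(a,s)g(b,t)=g(a,t)g(b,s)$ already yields a product factorisation of $g$ whenever $g\not\equiv 0$, via $g(a,s)=g(a,t_0)g(b_0,s)/g(b_0,t_0)$ for any point where $g(b_0,t_0)>0$), but as written the log step uses a slightly stronger hypothesis than the paper's proof does.
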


\begin{proof}
The argument is based on the proof of Theorem~\ref{th:main} and we provide only an outline. Also, for simplicity, we consider only $n=3$; the general case follows the same logic. As before, it is straightforward to check that the independence of margins imply diagonal conditional correlation matrix, so we focus on the reverse implication.

For simplicity and with a slight abuse of notation, we use $C$ and $c$ to denote the copula and the copula density corresponding to $(X_1,X_2,X_3)$, respectively. Let us assume that for any quantile splits $0<p_i<q_i<1$, $i=1, 2,3$, and the related set $A$ we get $\Sigma_A=\textrm{I}_3$ or equivalently
\begin{equation}\label{p:pr:equation:2_multi}
\cov_{A}[X_i,Y_j]=0, \quad i,j\in \{1,2,3\}, \, i\neq j.
\end{equation}
Next, as in~\eqref{eq:th:main:new1}, we get
\begin{align}
    \Cov_{A}[X_1,X_2]=\frac{1}{2\bP^2[A]}\int_{p_1}^{q_1}\int_{p_2}^{q_2}\int_{p_3}^{q_3}\int_{p_1}^{q_1}\int_{p_2}^{q_2}\int_{p_3}^{q_3}Q_X(u_1)Q_Y(v_1)V_c(u_1,v_1,w_1,u_2,v_2,w_2)dw_2dv_2 du_2 dw_1 dv_1 du_1,
\end{align}
where $V_c(u_1,v_1,w_1,u_2,v_2,w_2):=\left(c(u_1,v_1,w_1)c(u_2,v_2,w_2)-c(u_1,v_2,w_1)c(u_2,v_1,w_2)\right)$, $u_1,v_1,w_1,u_2,v_2,w_2\in (0,1)$. Also, setting $Q(u_1,v_1,u_2,v_2):=\left(Q_1(u_1)-Q_1(u_2)\right)\left(Q_2(v_1)-Q_2(v_2)\right)$, $u_1,u_2,v_1,v_2\in (0,1)$, and repeating the argument leading to~\eqref{eq:th:main:3}, we get
\[
0=\int_{p_3}^{q_3}\int_{p_3}^{q_3}Q(q_1,q_2,p_1,p_2)  V_c(q_1,q_2,w_1,p_1,p_2,w_2)dw_2 dw_1.
\]
Noting that $Q(q_1,q_2,p_1,p_2) >0$ and differentiating the iterated integral with respect to $q_3$ and $p_3$, for any $0<p_i<q_i<1$, $i=1, 2,3$, we get
\begin{align*}
    0& = V_c(q_1,q_2,q_3,p_1,p_2,p_3)+V_c(q_1,q_2,p_3,p_1,p_2,q_3)\\
    & = c(q_1,q_2,q_3)c(p_1,p_2,p_3)-c(q_1,p_2,q_3)c(p_1,q_2,p_3)+c(q_1,q_2,p_3)c(p_1,p_2,q_3)-c(q_1,p_2,p_3)c(p_1,q_2,q_3).
\end{align*}
In fact, as in the proof of Theorem~\ref{th:main}, we get that the formula is valid for any $p_i,q_i\in (0,1)$, $i=1,2,3$; see the discussion following~\eqref{eq:th:main:new5} for details. Using this observation, for any $x,y,z\in [0,1]$, as in~\eqref{eq:th:main:copula}, we get
\begin{align}\label{eq:th:multi_1}
    C(x,y,z)&=\int_0^x\int_0^y\int_0^z \int_0^1 \int_0^1 \int_0^1 c(u_1,v_1,w_1)c(u_2,v_2,w_2)dw_2dv_2 du_2 dw_1 dv_1 du_1 \nonumber\\
    & = \int_0^x\int_0^y\int_0^z \int_0^1 \int_0^1 \int_0^1  c(u_1,v_2,w_1)c(u_2,v_1,w_2)dw_2dv_2 du_2 dw_1 dv_1 du_1 \nonumber\\
    & \phantom{=} - \int_0^x\int_0^y\int_0^z \int_0^1 \int_0^1 \int_0^1  c(u_1,v_1,w_2)c(u_2,v_2,w_1)dw_2dv_2 du_2 dw_1 dv_1 du_1 \nonumber\\
    & \phantom{=} + \int_0^x\int_0^y\int_0^z \int_0^1 \int_0^1 \int_0^1  c(u_2,v_1,w_2)c(u_1,v_2,w_1)dw_2dv_2 du_2 dw_1 dv_1 du_1 \nonumber\\
    & = C(x,1,z)C(1,y,1)-C(x,y,1)C(1,1,z)+C(1,y,z)C(x,1,1) \nonumber\\
    & = C(x,1,z)y-C(x,y,1)z+C(1,y,z)x.
\end{align}
In particular, setting $z=1$, we get $C(x,y,1)=xy$, $x,z\in [0,1]$. 
Using the same argument applied to $\Cov_{A}[X_1,X_3]$ and $\Cov_{A}[X_2,X_3]$, we also get $C(x,1,z)=xz$ and $C(1,y,z)=yz$, $x,y,z\in [0,1]$.
Consequently, from~\eqref{eq:th:multi_1}, we get $C(x,y,z)=xyz$, $x,y,z\in [0,1]$, which concludes the proof.
\end{proof}

The next generalisation of Theorem~\ref{th:main} is based on linear combinations. For simplicity, given an $n$-dimentional random vector $X$ and an $m$-dimensional random vector $Y$, we pre-assume that for any $\alpha \in \mathbb{R}^n\setminus \{0\}$ and $\beta \in \mathbb{R}^m\setminus \{0\}$, the random vector ($\langle X, \alpha\rangle,\langle Y, \beta\rangle)$, where $\langle \cdot, \cdot \rangle$ denotes the standard Euclidean inner product, satisfy our usual assumptions, i.e. bijectiveness of the the marginal distribution functions, full support condition, absolute continuity of the joint distribution, and continuity of the copula density.

\begin{theorem}\label{th2}
Let $X$ and $Y$ be $n$-dimensional and $m$-dimensional random vectors, respectively. Then, $X$ and $Y$ are independent if and only if for any $\alpha \in \mathbb{R}^n\setminus \{0\}$ and $\beta \in \mathbb{R}^m\setminus \{0\}$, the random variables $\langle X, \alpha\rangle$ and  $\langle Y, \beta\rangle$ are conditionally uncorrelated, i.e. for any  quantile splits $0<p_1<q_1<1$ and $0<p_2<q_2<1$ and the related set $A$, defined for  $(\langle X, \alpha\rangle,\langle Y, \beta\rangle)$, we get
\begin{equation}\label{eq:th2:covariance}
\cor_{A}\left[\langle X, \alpha\rangle,\langle Y, \beta\rangle\right]=0.
\end{equation}
\end{theorem}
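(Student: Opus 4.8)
The plan is to reduce the assertion to the one-dimensional Theorem~\ref{th:main} and then lift the resulting family of scalar independence statements to the joint independence of $X$ and $Y$ by means of characteristic functions. As in the earlier proofs, the forward implication is the soft one: if $X$ and $Y$ are independent, then for fixed $\alpha$ and $\beta$ the projections $\langle X,\alpha\rangle$ and $\langle Y,\beta\rangle$ are measurable functions of $X$ and $Y$ separately, hence independent, and therefore conditionally uncorrelated on every quantile set by the same standard argument referenced in the proof of Theorem~\ref{th:main}. I would dispatch this direction in one sentence and concentrate on the converse.

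For the converse, first fix an arbitrary pair $\alpha\in\mathbb{R}^n\setminus\{0\}$ and $\beta\in\mathbb{R}^m\setminus\{0\}$. By the pre-assumed regularity, the vector $(\langle X,\alpha\rangle,\langle Y,\beta\rangle)$ meets the hypotheses of Theorem~\ref{th:main}, and \eqref{eq:th2:covariance} states exactly that it is conditionally uncorrelated on every quantile set. Hence Theorem~\ref{th:main} applies verbatim and yields that $\langle X,\alpha\rangle$ and $\langle Y,\beta\rangle$ are independent. In this way the scalar-valued conclusion of the base theorem becomes available simultaneously for every direction $(\alpha,\beta)$.

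The remaining step is to translate these projection-wise independences into the factorization of the joint characteristic function. Writing $\phi_{X,Y}(s,t):=\mathbb{E}[e^{i(\langle s,X\rangle+\langle t,Y\rangle)}]$ and using $\langle u\alpha,X\rangle=u\langle X,\alpha\rangle$, the independence of $\langle X,\alpha\rangle$ and $\langle Y,\beta\rangle$ means that for all $u,v\in\mathbb{R}$,
\[
\phi_{X,Y}(u\alpha,v\beta)=\phi_X(u\alpha)\,\phi_Y(v\beta).
\]
The key observation is that as $\alpha,\beta$ range over all nonzero vectors and $u,v$ over all reals, the point $(u\alpha,v\beta)$ sweeps out all of $\mathbb{R}^n\times\mathbb{R}^m$: given any $(s,t)$, take $\alpha=s$ and $u=1$ when $s\neq0$ (and $u=0$ with an arbitrary nonzero $\alpha$ otherwise), and likewise for $t$. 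Consequently $\phi_{X,Y}(s,t)=\phi_X(s)\phi_Y(t)$ for every $s\in\mathbb{R}^n$ and $t\in\mathbb{R}^m$, and by the uniqueness theorem for characteristic functions this is precisely the independence of $X$ and $Y$.

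The step carrying the real content is the invocation of Theorem~\ref{th:main} in the converse direction, since that is what upgrades the purely second-order (zero conditional covariance) hypothesis to genuine independence of each one-dimensional projection; everything downstream is a soft Cram\'er--Wold-type passage that needs no analytic input beyond the covering of $\mathbb{R}^n\times\mathbb{R}^m$ by the two-dimensional families $\{(u\alpha,v\beta):u,v\in\mathbb{R}\}$. The only technical point worth stating explicitly is that each fixed $(\alpha,\beta)$ genuinely produces quantile sets of the form demanded by Theorem~\ref{th:main} for the pair $(\langle X,\alpha\rangle,\langle Y,\beta\rangle)$, which is exactly what the regularity pre-assumption on every such projected vector guarantees.
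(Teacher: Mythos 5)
Your proposal is correct and follows essentially the same route as the paper's proof: apply Theorem~\ref{th:main} to each projected pair $(\langle X,\alpha\rangle,\langle Y,\beta\rangle)$ to get their independence, then deduce the factorization $\phi_{(X,Y)}(\alpha,\beta)=\phi_X(\alpha)\phi_Y(\beta)$ and conclude by the characteristic-function criterion for independence. Your treatment is in fact slightly more careful than the paper's, since you explicitly handle the degenerate directions ($s=0$ or $t=0$) when covering all of $\mathbb{R}^n\times\mathbb{R}^m$.
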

\begin{proof}
As in the proof of Theorem~\ref{th:main}, we focus on the argument that~\eqref{eq:th2:covariance} implies independence; the reverse implication is standard. Note that~\eqref{eq:th2:covariance} combined with Theorem~\ref{th:main} implies that the random variables $\langle X, \alpha\rangle$ and $\langle Y, \beta\rangle$ are independent for any $\alpha \in \mathbb{R}^n$ and $\beta \in \mathbb{R}^m$. In particular, we get
\begin{equation}\label{eq:th2:char_funct}
\phi_{(X,Y)}(\alpha,\beta)=\phi_X(\alpha)\phi_Y(\beta), \quad \alpha\in \mathbb{R}^n, \, \beta \in \mathbb{R}^m,  
\end{equation}
where $\phi_Z(t):=\mathbb{E}[\exp\left(i\langle Z,t\rangle\right)]$, $t\in \mathbb{R}^d$, denotes the characteristic function of an arbitrary $d$-dimensional random vector $Z$. Combining~\eqref{eq:th2:char_funct} with Theorem 4, Section II.12, in~\cite{Shi1996} we conclude the proof.
\end{proof}

Theorem~\ref{th2} can be used to get another characterisation of random vector margins independence based on a recursive scheme. For an $n$-dimensional vector $a:=(a_1, \ldots, a_n)$ and $k=1, \ldots, n$, let $a^{1:k}$ denote its subvector $a^{1:k}:=(a_1, \ldots, a_k)$. Again, for simplicity, for $n$-dimensional random vector $X$, we assume that for any $k=1, \ldots, n-1$ and $\alpha^{k}\in \mathbb{R}^k\setminus\{0\}$, the random variables $X_{k+1}$ and $\langle X^{1:{k}}, \alpha^k\rangle$ satisfy our standard assumptions.

\begin{theorem}\label{th:TS}
Let $X$ be an $n$-dimensional random vector. Assume that for any $k=1, \ldots, n-1$ and $\alpha^{k}\in \mathbb{R}^k\setminus\{0\}$, the random variables $X_{k+1}$ and $\langle X^{1:{k}}, \alpha^k\rangle$ are conditionally uncorrelated. Then, the margins of $X$ are (jointly) independent. 
\end{theorem}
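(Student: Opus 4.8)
The plan is to prove the statement by induction on the dimension, building up joint independence of the first $k$ coordinates one at a time, and invoking Theorem~\ref{th2} at each step to glue the new coordinate onto the block already shown to be independent.

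First I would fix the inductive claim that $X^{1:k}=(X_1,\ldots,X_k)$ has jointly independent margins; the base case $k=1$ is vacuous. For the inductive step, assuming $X^{1:k}$ has jointly independent margins, I would apply Theorem~\ref{th2} with the $k$-dimensional vector $X^{1:k}$ in the role of $X$ and the one-dimensional $X_{k+1}$ in the role of $Y$ (so $m=1$). Theorem~\ref{th2} demands that $\langle X^{1:k},\alpha^k\rangle$ and $\beta X_{k+1}$ be conditionally uncorrelated on every quantile set, for all $\alpha^k\in\mathbb{R}^k\setminus\{0\}$ and $\beta\in\mathbb{R}\setminus\{0\}$. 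Here I would note that, since $X_{k+1}$ is scalar, $\langle X_{k+1},\beta\rangle=\beta X_{k+1}$, and because conditional correlation is invariant under nonzero scaling of either argument, the requirement $\cor_A[\langle X^{1:k},\alpha^k\rangle,\beta X_{k+1}]=0$ holds for every $\beta\neq 0$ precisely when $\cor_A[\langle X^{1:k},\alpha^k\rangle,X_{k+1}]=0$. The latter is exactly the hypothesis of the theorem, so Theorem~\ref{th2} yields that $X^{1:k}$ and $X_{k+1}$ are independent.

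Next I would combine the two pieces now in hand: the induction hypothesis that $(X_1,\ldots,X_k)$ are jointly independent, and the block independence of $X^{1:k}$ from $X_{k+1}$ just obtained. Factoring the joint distribution function first across the block boundary and then within the block gives
\[
\bP[X_1\leq x_1,\ldots,X_{k+1}\leq x_{k+1}]=\bP[X^{1:k}\leq (x_1,\ldots,x_k)]\,\bP[X_{k+1}\leq x_{k+1}]=\prod_{i=1}^{k+1}\bP[X_i\leq x_i],
\]
so that $(X_1,\ldots,X_{k+1})$ are jointly independent. This advances the induction, and taking $k=n-1$ yields the joint independence of all $n$ margins.

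The argument is essentially bookkeeping once Theorem~\ref{th2} is in place; the one point requiring a line of care is the reduction of the two-parameter uncorrelation requirement of Theorem~\ref{th2} to the single-parameter hypothesis stated here. This rests on the scale-invariance of conditional correlation together with the observation that rescaling a scalar margin by a nonzero $\beta$ merely relabels (and, for $\beta<0$, reflects) its quantile sets rather than producing new ones, so the family of conditioning sets is preserved. The composition of block independence with the inner joint independence into full joint independence is the standard factorisation of the distribution function and presents no real obstacle.
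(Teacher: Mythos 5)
Your proof is correct and follows essentially the same route as the paper's: both arguments peel off one coordinate at a time by applying Theorem~\ref{th2} to the pair $(X^{1:k},X_{k+1})$ and inducting, the only difference being that the paper assembles the resulting block independences via characteristic functions and the factorisation criterion from Shiryaev, whereas you factorise the joint distribution function directly. Your explicit justification that the single-$\beta$ hypothesis suffices to invoke Theorem~\ref{th2} --- scale invariance of the conditional correlation together with the fact that rescaling a scalar margin by a nonzero constant only relabels (or reflects) its quantile sets --- is a point the paper's proof leaves implicit, and you handle it correctly.
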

\begin{proof}
Using Theorem~\ref{th2}, we get that, for any $\alpha^{n-1}\in \mathbb{R}^{n-1}\setminus\{0\}$, the random variables $X_n$ and $\langle X^{1:{(n-1)}}, \alpha^{n-1}\rangle$ are independent. Hence, the characteristic function of the random vector $X$ satisfies $\phi_X(\alpha)=\phi_{X_n}(\alpha_n)\phi_{X^{1:(n-1)}}(\alpha^{1:(n-1)})$, $\alpha \in \mathbb{R}^n$. In fact, inductively, we get that the characteristic funcion of $X$ factorises into the product of the characteristic functions of the margins, i.e.
\[
\phi_X(\alpha)=\prod_{k=1}^n \phi_{X_i}(\alpha_i), \quad \alpha \in \mathbb{R}^n. 
\]
Using Theorem 4, Section II.12, in~\cite{Shi1996} we conclude the proof.
\end{proof}
\begin{remark}
It should be noted that Theorem~\ref{th:TS} could be used to get a characterisation of infinite series of random variables, e.g. $(X_t)_{t\in \bN}$. Indeed, it is enough to recall that the independence of the family $(X_t)_{t\in \bN}$ means independence of any finite subfamily of random variables.
\end{remark}


\section{Examples}\label{S:examp}
We conclude this note with two simple examples which illustrate Theorem~\ref{th:main}. Example~\ref{ex:1} refers to a classic example of uncorrelated random variables which are not independent, while Example~\ref{ex:2} shows how one can localise the auto-correlation analysis in signal processing to study time-series independence. In particular, it should be noted that our framework allows to consider all classes of processes including the ones with heavy-tails, for which the unconditional auto-correlation function might not exist.

\begin{example}[Uncorrelated normal random variables that are not independent]\label{ex:1}
Let us consider a classic example of two uncorrelated normal random variables that are not independent, see \cite{Bro1986}.  Let $X$ be a standard normal random variable and $W$ be a Rademacher random variable. Let us assume that $X$ and $W$ are independent and set $Y:=WX$. Using a classic argument one may show that: (1) $Y$ is standard normal random variable; (2) $X$ and $Y$ are (unconditionally) uncorrelated; (3) $X$ and $Y$ are not independent. To illustrate how Theorem~\ref{th:main} works, we provide an explicit formula for the quantile conditional covariance of $X$ and $Y$.

To ease the notation, we denote by $\Phi$ and $\phi$ the standard normal cumulative distribution function and probability density function, respectively. Also, we fix some quantile splits $0<p_1<q_1<1$ and $0<p_2<q_2<1$ and define
$l:=\max(\Phi^{-1}(p_1), \Phi^{-1}(p_2))1_{\{q_1>p_2\}}1_{\{q_2>p_1\}}$, $r:=\min(\Phi^{-1}(q_1), \Phi^{-1}(q_2))1_{\{q_1>p_2\}}1_{\{q_2>p_1\}}$, $\hat{l}:=\max(\Phi^{-1}(p_1), \Phi^{-1}(1-q_2))1_{\{q_1>1-q_2\}}1_{\{1-p_2>p_1\}}$, and $\hat{r}:=\min(\Phi^{-1}(q_1), \Phi^{-1}(1-p_2))1_{\{q_1>1-q_2\}}1_{\{1-p_2>p_1\}}$;
note that $l$ and $r$ are simply the left-most and the right-most points of the set $[\Phi^{-1}(p_1),\Phi^{-1}(q_1)]\cap[\Phi^{-1}(p_2),\Phi^{-1}(q_2)]$, respectively, provided that the intersection is non-empty; a similar interpretation holds for $\hat{l}$ and $\hat{r}$. With this notation, it is easy to check that
\begin{align}\label{eq:covariance_normal}
    \Cov_{A}[X,Y]&=\frac{l\phi(l)-r\phi(r)+\Phi(r)-\Phi(l)}{\Phi(r)-\Phi(l)+\Phi(\hat{r})-\Phi(\hat{l})} - \frac{\hat{l}\phi(\hat{l})-\hat{r}\phi(\hat{r})+\Phi(\hat{r})-\Phi(\hat{l})}{\Phi(r)-\Phi(l)+\Phi(\hat{r})-\Phi(\hat{l})} -\frac{(\phi(l)-\phi(r))^2-(\phi(\hat{l})-\phi(\hat{r}))^2}{(\Phi(r)-\Phi(l)+\Phi(\hat{r})-\Phi(\hat{l}))^2}.
\end{align}
In Figure~\ref{F:contour}, we present the values of $\Cov_{A}[X,Y]$ for exemplary quantile splits.
\begin{figure}[htp!]
\begin{center}
\includegraphics[width=0.25\textwidth]{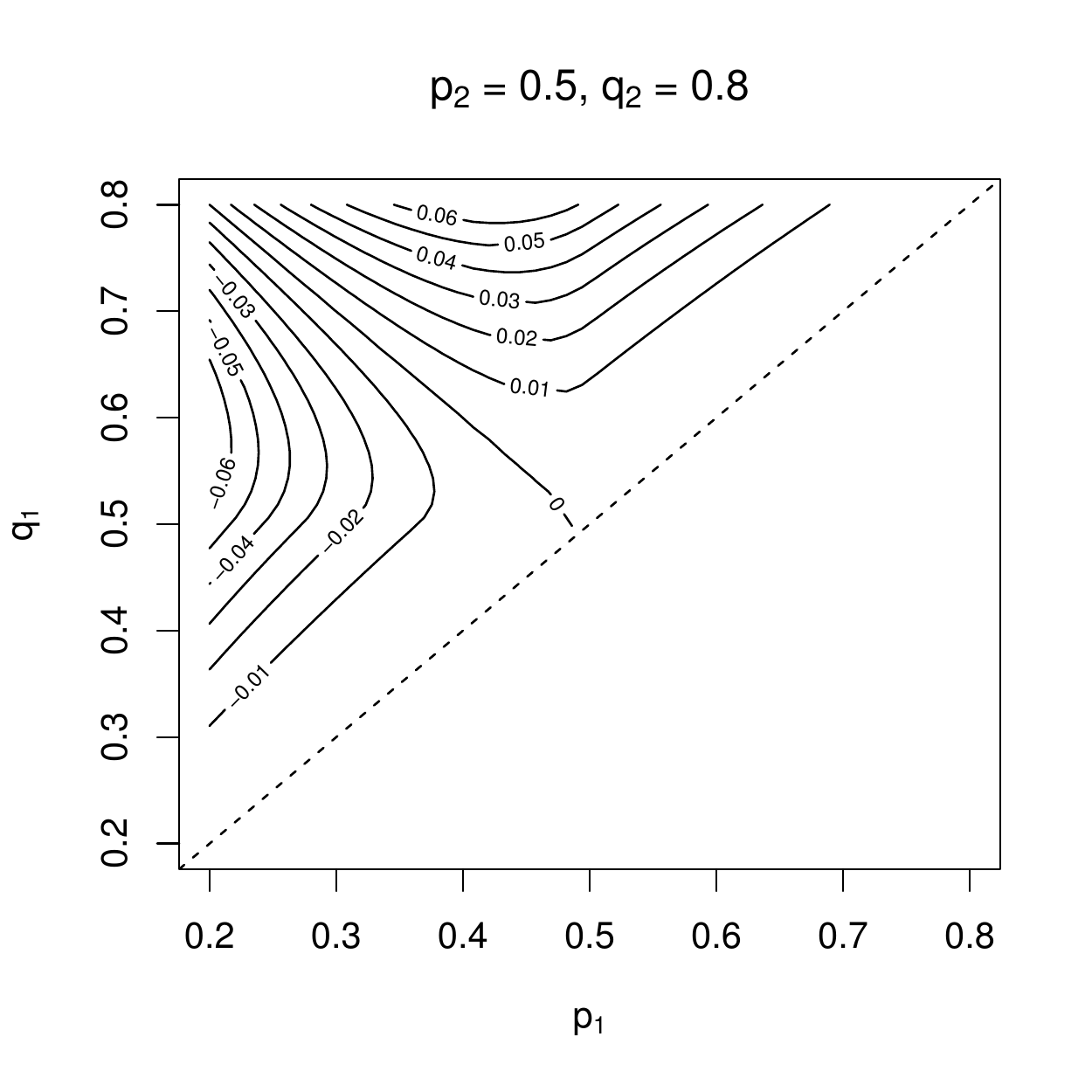}
\includegraphics[width=0.25\textwidth]{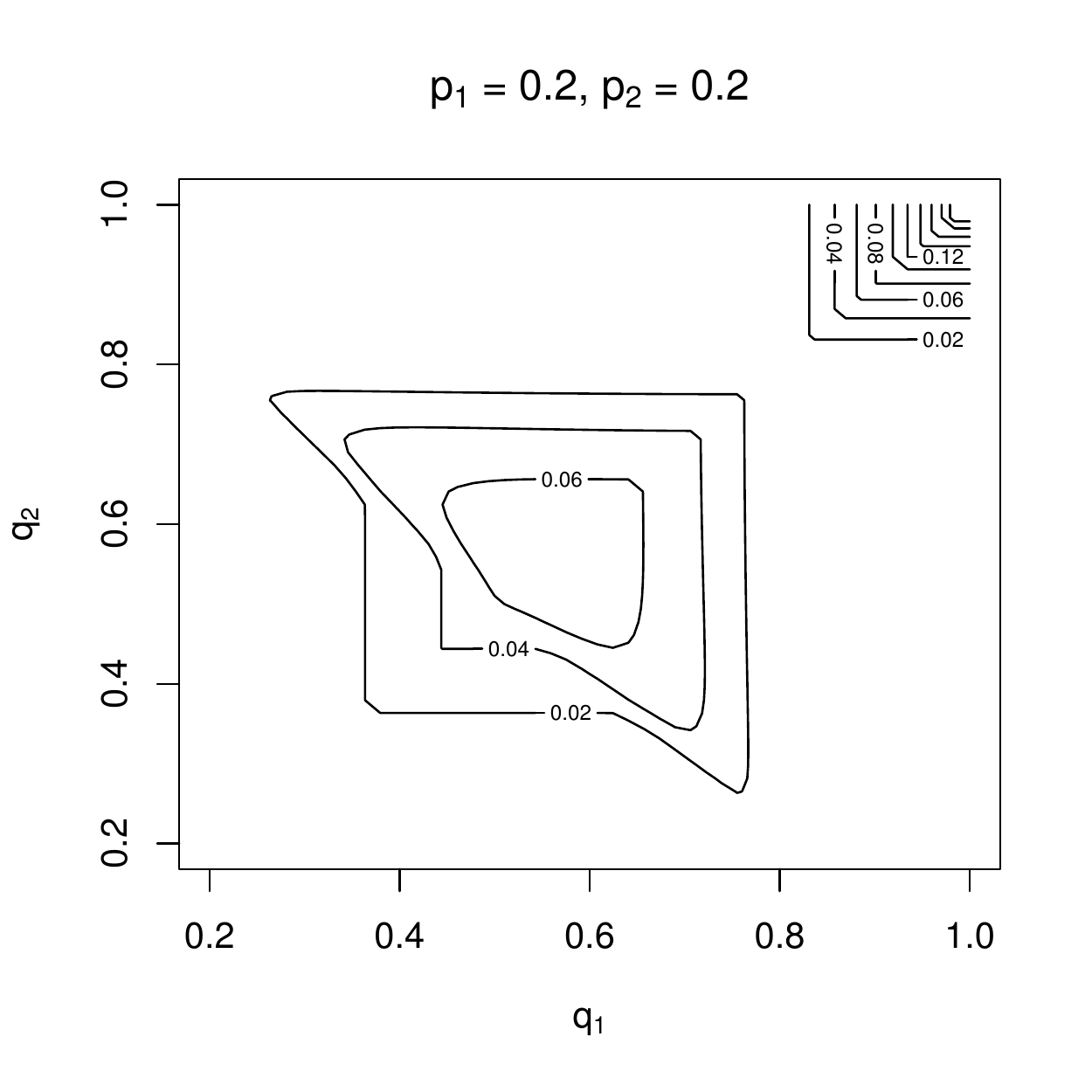}
\end{center}
\caption{Contour plots of $\Cov_A[X,Y]$ for various choices of quantile splits and the corresponding set $A$ given by~\eqref{eq:A}. The left panel shows $0.2<p_1<q_1<0.8$ and $p_2=0.5$, $q_2=0.8$ while the right panel shows $p_1=p_2=0.2$ and $q_1, q_2\in (0.2, 1.0)$. The results are based on Equation~\eqref{eq:covariance_normal}.}\label{F:contour}
\end{figure}
From Figure~\ref{F:contour} we see that the quantile conditional covariances may detect dependence between random variables, as described in Theorem~\ref{th:main}. In particular, for $p_1=p_2=0.5$ and $q_1=q_2=0.8$, we get $ \Cov_{A}[X,Y]\approx 0.0573$, which directly proves that $X$ and $Y$ are not independent. 
\end{example}

\FloatBarrier

\begin{example}[Auto-correlation analysis]\label{ex:2}
The analysis of auto-correlation is a common and standard technique used in time series analysis and signal processing to detect serial dependence in time-series, see e.g. \cite{Ham1994,BroDav2016} and references therein. In particular, it is often used to verify lack of trend or volatility clustering in financial data, see e.g. \cite{FamFre1988,Con2001,JiaSaaXia2016}. Given a time-series sample $(x_t)_{t=1}^{n}$, the empirical auto-correlation of lag $k$ is typically computed by estimating the (unconditional) correlation between the sub-samples $(x_{t})_{t=k+1}^{n}$ and $(x_{t})_{t=1}^{n-k}$. In this simple example, we use market data to show how the information about conditional correlation could be used to refine standard auto-correlation function (ACF) analysis. For simplicity, we decided to take one exemplary stock market data. Namely, we consider weekly (adjusted price) returns of AAPL stock in the period 01/08/2016 -- 01/08/2022, the data is illustrated in Figure~\ref{F:1}.

\begin{figure}[htp!]
\begin{center}
\includegraphics[width=0.4\textwidth]{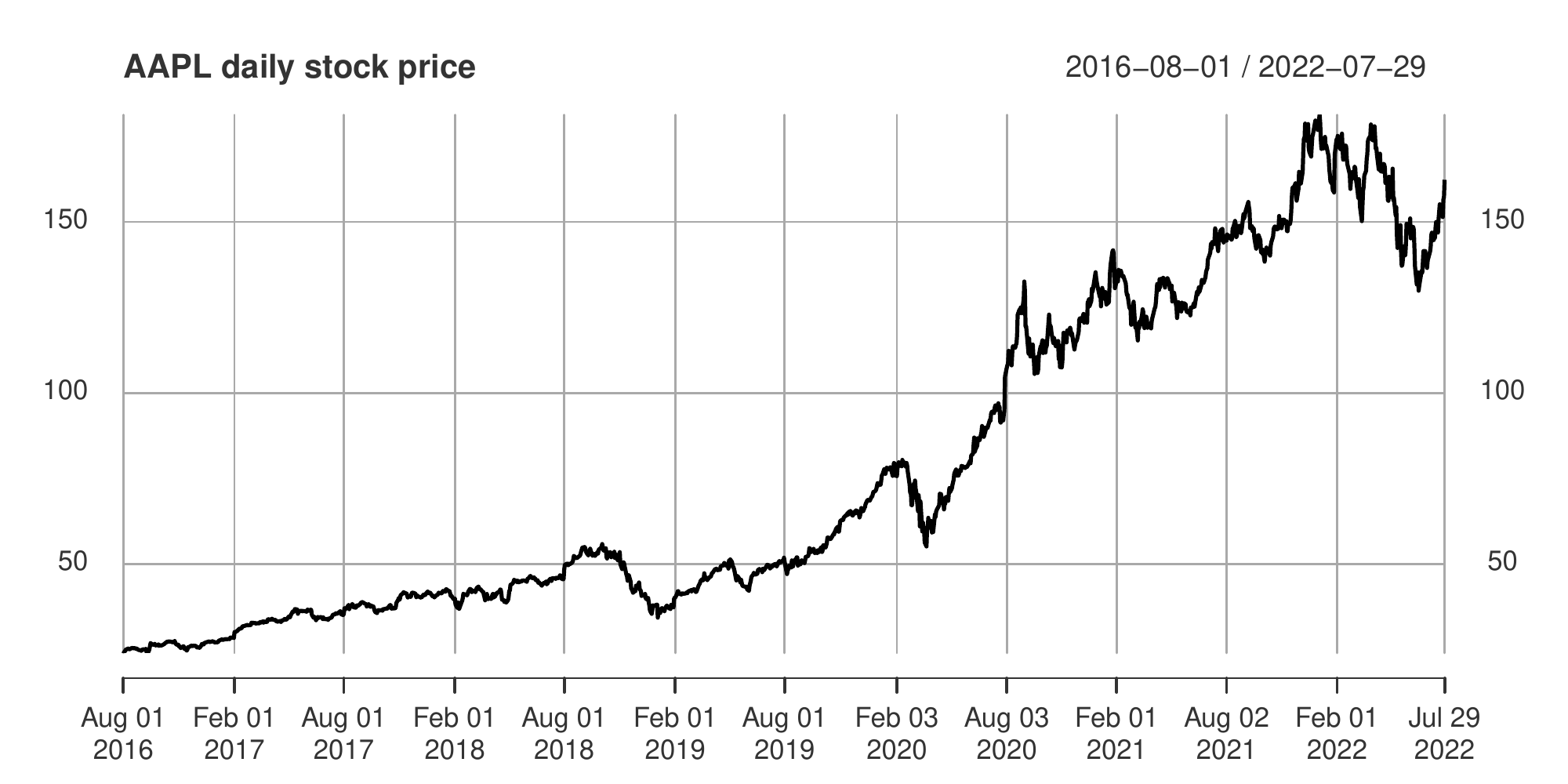}
\includegraphics[width=0.4\textwidth]{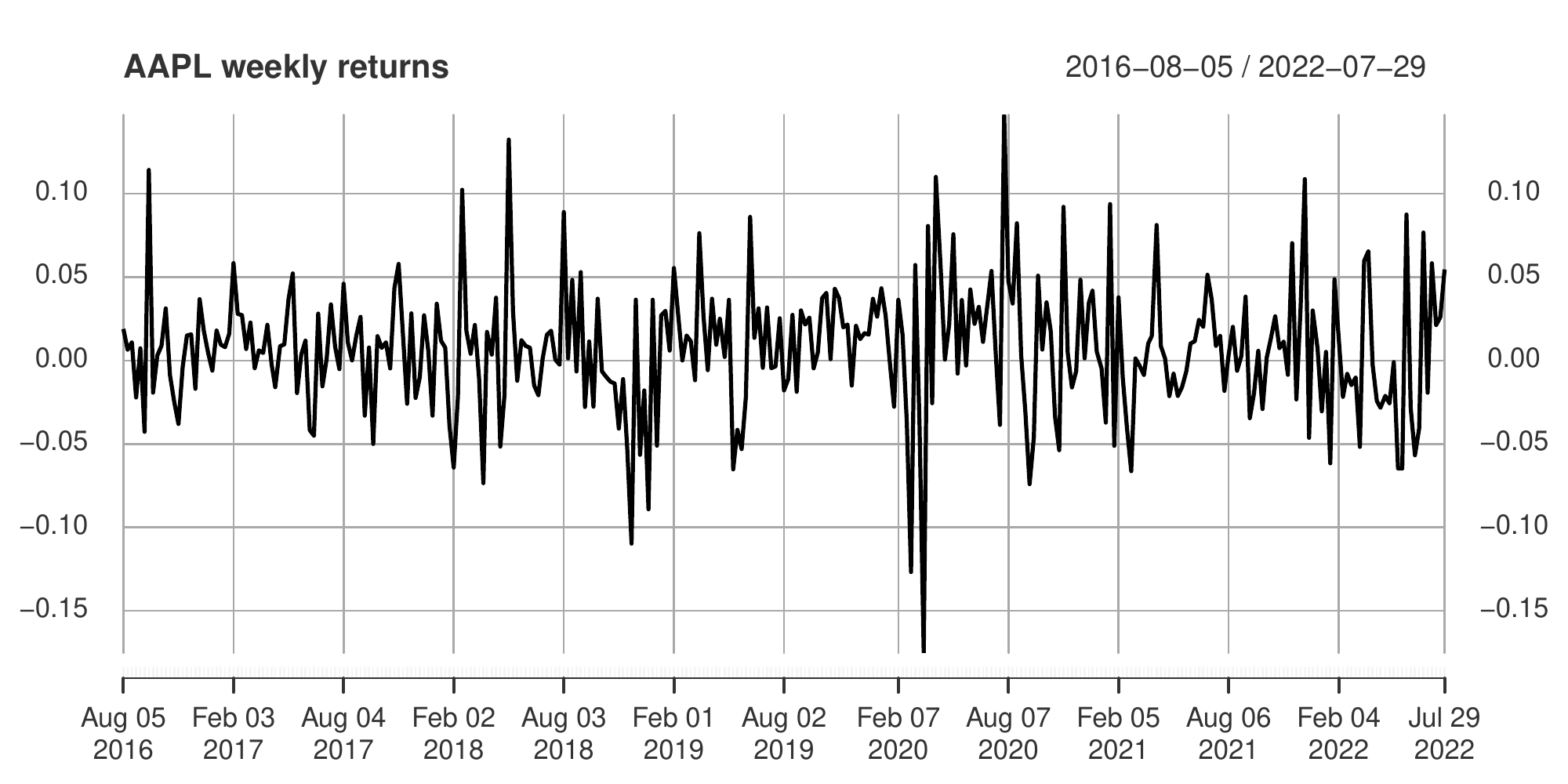}
\end{center}
\caption{AAPL stock price daily quotes and weekly log-returns in the period 01/08/2016 -- 01/08/2022.}\label{F:1}
\end{figure}

In Figure~\ref{F:2}, we present the classical auto-correlation function (ACF) plots for log-returns, absolute values of log-returns, as well as squared log-returns. 
\begin{figure}[htp!]
\begin{center}
\includegraphics[width=0.30\textwidth]{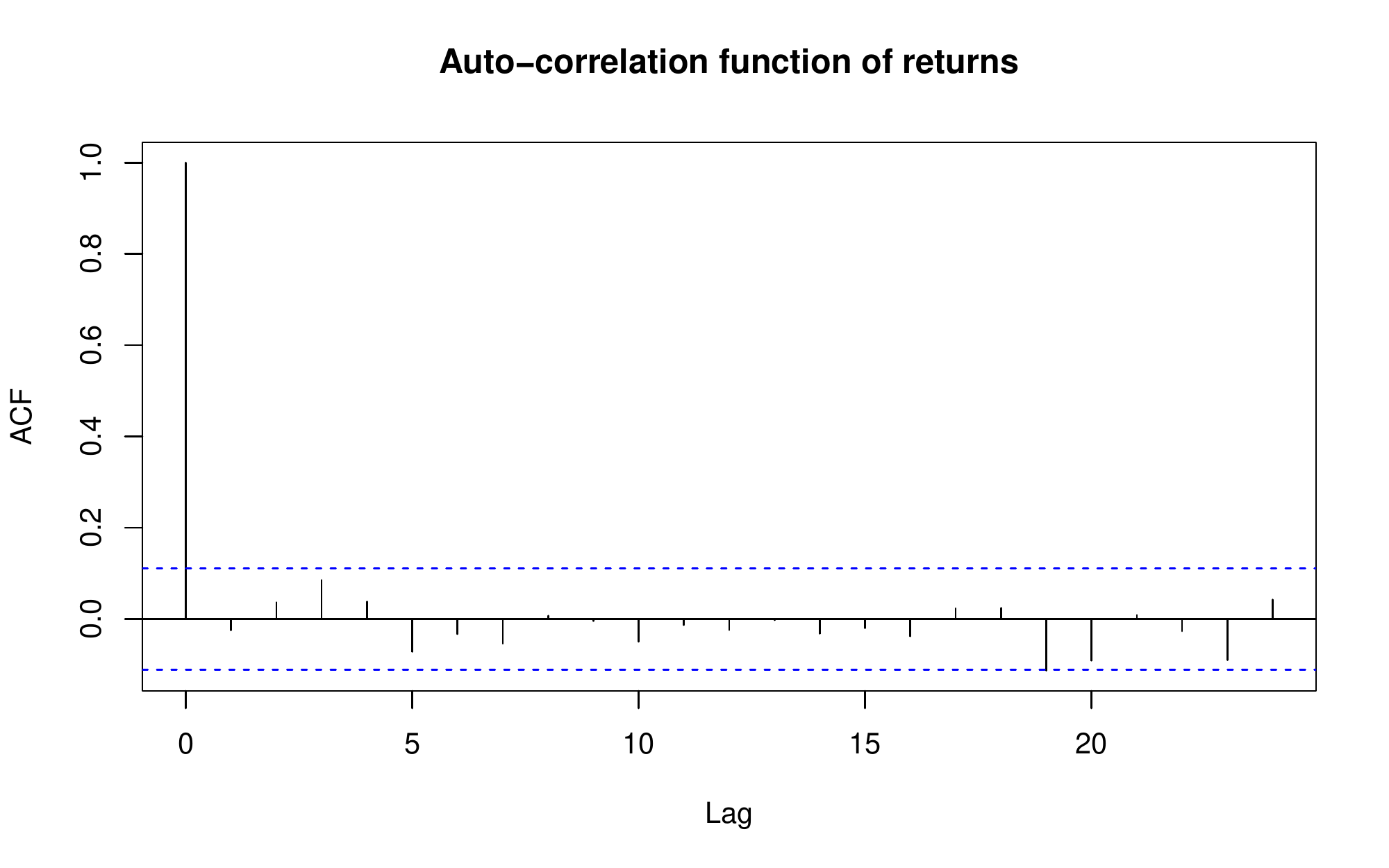}
\includegraphics[width=0.30\textwidth]{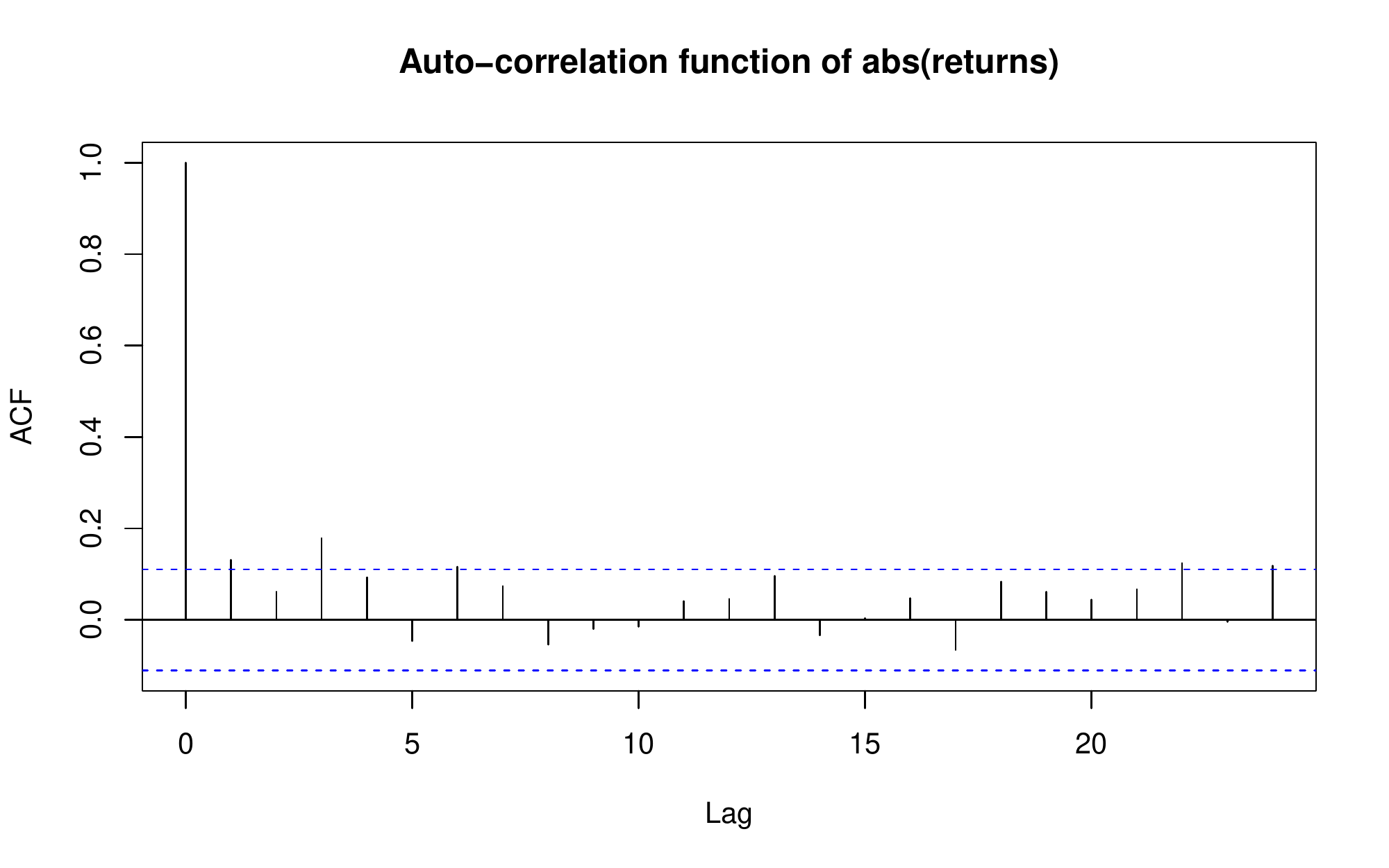}
\includegraphics[width=0.30\textwidth]{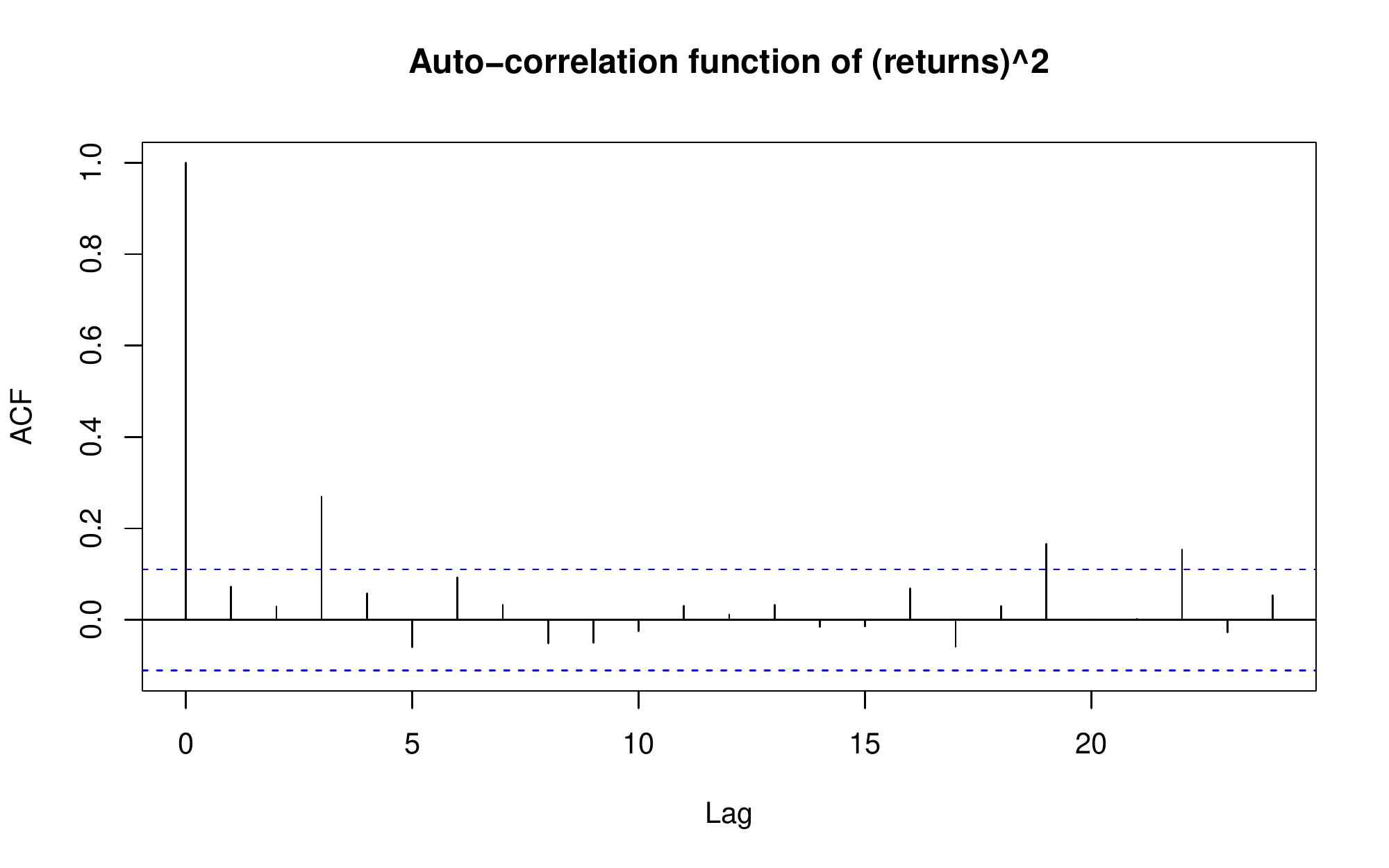}
\end{center}
\caption{Auto-correlation function (ACF) plots for AAPL stock price weekly log-returns in the period 01/08/2016 -- 01/08/2022. The left exhibit shows ACF applied to log-returns, the middle exhibit shows ACF applied to absolute values of log-returns, while the right exhibit shows ACF applied to squared log-returns.}\label{F:2}
\end{figure}
While, the first plot is often used for generic independence check (lack of trend), the last two might be used to investigate the so-called volatility clustering effect, see~\cite{Con2001}. Although from Figure~\ref{F:1} one can deduce that the data is not i.i.d., the ACF functions do not detect any major problem, especially for lag $k=1$.

Let us now focus on lag $k=1$ and check if we can refine the ACF analysis using conditional auto-correlation function rather than the unconditional one. From Theorem~\ref{th:main} we know that to check independence between consecutive observations, we can calculate their local auto-correlation  on a quantile set, i.e. empirical correlation for conditioned samples $(x_{t})_{t=2}^{n}$ and $(x_{t})_{t=1}^{n-1}$. Namely, let us consider the quantile split $p_1=p_2=0.01$ and $q_1=q_2=0.7$, and compute the conditional correlation on the corresponding set $A$ given by~\eqref{eq:A}. In Figure~\ref{F:3}, we present the lagged sample plot where the values of $x_t$ are confronted with the values of $x_{t-1}$; the red data-points indicate conditional sub-sample on which correlation is computed. The estimated value of the conditional correlation is equal to 0.31, which indicate that the time-series observations are not independent. To sanity check if this claim is statistically significant, we performed a simple normal distribution based Monte Carlo exercise. Namely, we picked $M=100\,000$ strong Monte Carlo samples of size $n=312$, i.e. size equal to the size of the original sample, from independent normal distributions. For each run, we computed the conditional correlation for the same lag and the same sample quantile set. The 0.1\% upper quantile of the obtained MC density is equal to 0.25, which shows that the initial sample empirical correlation 0.31 is (statistically) significantly different  from zero.

\begin{figure}[htp!]
\begin{center}
\includegraphics[width=0.3\textwidth]{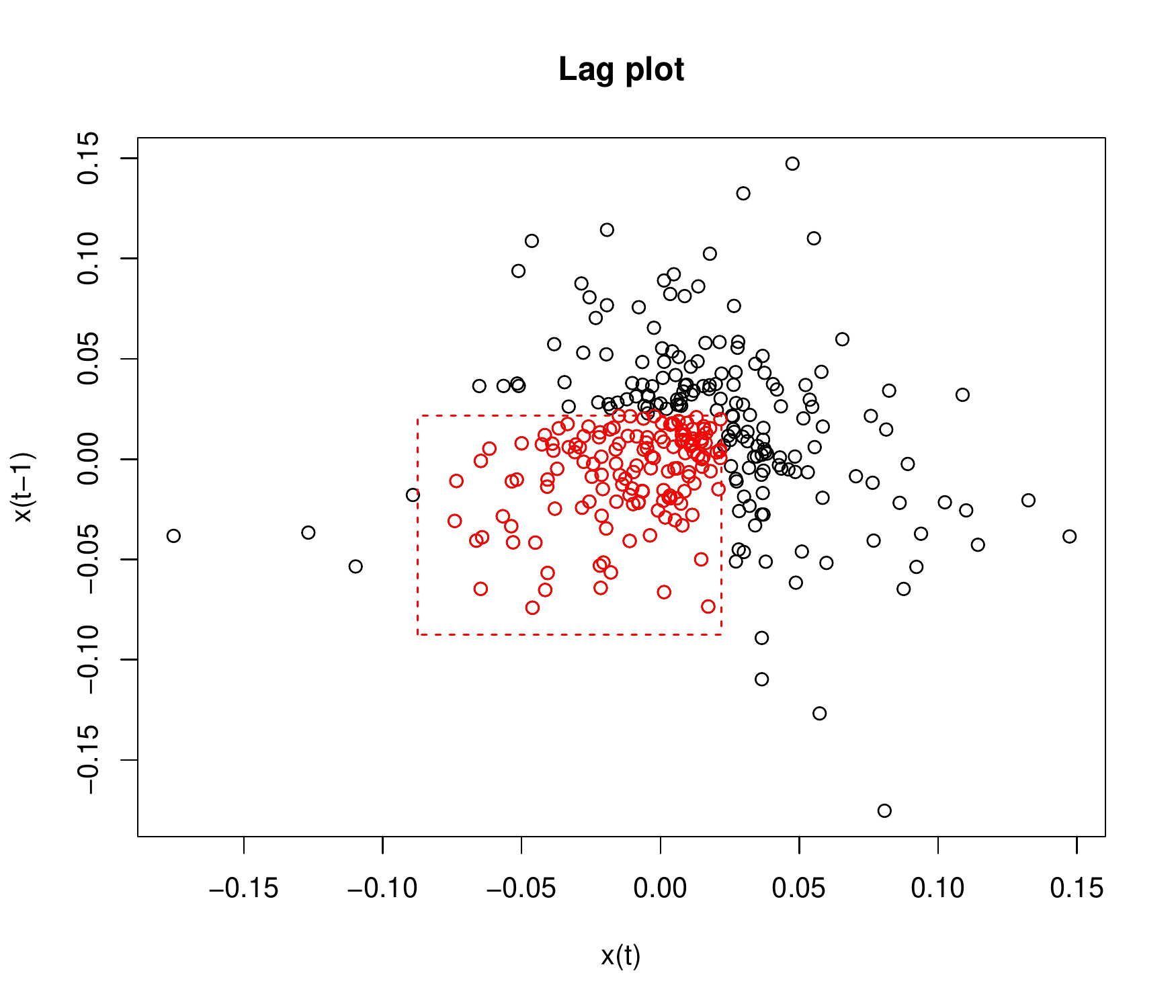}
\end{center}
\caption{Lag plot ($k=1$) for weekly APPL log-returns. The red region indicates the conditional set $A$ on which we computed (empirical) conditional correlation which is equal to 0.31. This indicates that the consequent observations are not independent.}\label{F:3}
\end{figure}
We want to note that while the analysis performed in this example is simplistic, and the obtained statistical significance is based on strong normality assumptions, the obtained results indicate that the conditional version of the auto-correlation function might be useful in time-series analysis. Also, note that such analysis could be applied even to heavy tailed data for which the unconditional auto-correlation might not exists; this topics are left for the future research.
\end{example}


\section*{Acknowledgements}
Marcin Pitera acknowledges support from the National Science Centre, Poland, via project 2020/37/B/HS4/00120. Part of the work of Damian Jelito was funded by the Priority Research Area Digiworld under the program Excellence Initiative – Research University at the Jagiellonian University in Krak\'{o}w. 
\newpage
\begin{footnotesize}
\bibliographystyle{agsm}
\bibliography{mybibliography}
\end{footnotesize}


\end{document}